\theoremstyle{definition}
\newtheorem{theorem}{Theorem}[section]
\newtheorem{example}{Example}
\newtheorem{lemma}{Lemma}[section]
\numberwithin{equation}{section}
\numberwithin{figure}{section}
\title{Hybrid explicit-implicit learning for multiscale problems with time dependent source}
\author{Yalchin Efendiev\footnote{Department of Mathematics, Texas A\&M University, College Station, TX 77843, USA}, \ Wing Tat Leung\footnote{Department of Mathematics, City University of Hong Kong, Hong Kong SAR}, \ Wenyuan Li\footnote{Department of Mathematics, Texas A\&M University, College Station, TX 77843, USA}, \ Zecheng Zhang\footnote{Department of Mathematical Sciences,
Carnegie Mellon University,
Pittsburgh, PA 15213, USA}}
\begin{document}

\maketitle

\section*{Abstract}
%Many finite element methods have been extensively studied to solve the diffusion reaction equations with multiscale properties. 
The splitting method is a powerful method for solving partial differential 
equations. 
Various splitting methods have been designed to separate different physics, 
nonlinearities, and so on.
Recently, a new splitting approach has been proposed where 
some degrees of freedom are
handled implicitly while other degrees of freedom are handled explicitly. 
As a result, the scheme contains two equations, one implicit and the other explicit.
The stability of this approach has
been studied. It was shown that the time step scales
 as the coarse spatial mesh size, which
can provide a significant computational advantage. However, 
the implicit solution part can be expensive,
especially for nonlinear problems. In this paper, we introduce modified
 partial machine learning algorithms to 
replace the implicit solution part of the algorithm. 
These algorithms are first introduced in 
\textit{`HEI: Hybrid Explicit-Implicit Learning For Multiscale Problems'}, 
where
a homogeneous source term is considered along with the Transformer, 
which is a neural network that can 
predict future dynamics. In this paper, we consider time-dependent source terms which is a generalization of the previous work. Moreover, we use the whole history of the solution
to train the network. As the implicit part of the equations is more complicated 
to solve, we design a neural network to predict it based on 
training. Furthermore,
 we compute the explicit part of the solution using our 
splitting strategy. In addition, we use 
Proper Orthogonal Decomposition based model reduction in machine learning. 
The machine learning algorithms provide computational 
saving without sacrificing accuracy. We present three numerical 
examples which show that our machine learning scheme is stable and accurate.

\section{Introduction}

Many physical problems vary at multiple space and time scales. The solutions of these problems are difficult
to compute as we need to resolve the time and space scales. For example, porous media flow
and transport occur over many space and time scales. Many multiscale and splitting algorithms
have been proposed to solve these problems \cite{chung_partial_expliict21,chung2021contrastindependent,li2021partially}. In this paper, we would like to combine machine learning and splitting/multiscale approaches
to predict the dynamics of complex forward problems. Our goal is to perform machine learning
on a small part of the solution space, while computing ``the rest'' of the solution fast. Our proposed
splitting algorithms allow doing it, which we will describe below.

Our approaches use multiscale methods in their construction to achieve coarser time stepping.
Multiscale methods have extensively been studied in the literature.
For linear problems, homogenization-based approaches 
\cite{eh09,le2014msfem, leung2021nh}, multiscale
finite element methods \cite{eh09,hw97,jennylt03}, 
generalized multiscale finite element methods (GMsFEM) 
\cite{chung2016adaptiveJCP,MixedGMsFEM,WaveGMsFEM,chung2018fast,GMsFEM13}, 
Constraint Energy Minimizing GMsFEM (CEM-GMsFEM) 
\cite{chung2018constraint, chung2018constraintmixed, chetverushkin2021computational}, nonlocal
multi-continua (NLMC) approaches \cite{NLMC},
metric-based upscaling \cite{oz06_1}, heterogeneous multiscale method 
\cite{ee03}, localized orthogonal decomposition (LOD) 
\cite{henning2012localized}, equation-free approaches \cite{rk07,skr06}, 
multiscale stochastic approaches \cite{hou2017exploring, hou2019model, hou2018adaptive},
and hierarchical multiscale methods \cite{brown2013efficient} have been studied. 
Approaches such as GMsFEM and NLMC are designed to handle high-contrast problems.
For nonlinear problems, we can replace linear multiscale basis functions with nonlinear maps \cite{ep03a,ep03c,efendiev2014generalized}.

Splitting methods are introduced when solving dynamic problems \cite{ascher1997implicit,shi2019local,frank1997stability}. There is a wide range of applications for splitting methods, e.g. splitting physics, scales, and so on. Splitting methods allow simplifying the large-scale problem into smaller-scale problems. We couple them  and obtain the final solution after we solve the smaller-scale problems. Recently, a partially explicit splitting method \cite{chung_partial_expliict21,chung2021contrastindependent, https://doi.org/10.48550/arxiv.2204.00554} is proposed for linear and nonlinear multiscale problems. In this approach, we treat some solution parts implicitly while treating the other parts explicitly. The stability analysis shows that the time step only depends on the explicit part of the solution. With the careful splitting of the space, we are able to make the time step scale as the coarse mesh size and independent of contrast.

To illustrate the main idea of our methods, we consider
\[  u_t + f(u)=g(x,t).  \]
In this paper, we combine machine learning and splitting approaches (such as the partially explicit method).
In the splitting algorithm, we write the solution as $u=u_1+u_2$, where $u_1$ is computed implicitly and $u_2$ is computed explicitly. 
In our previous work, 
the Transformer has been used to solve this kind of problem in \cite{efendiev2021hei, chung2021multi}. In \cite{efendiev2021hei}, 
we consider zero source term and use the Transformer to predict the future dynamics of the solution. In our current paper, we consider full dynamics and time-dependent source terms. The latter occurs in porous media applications and represents the rates of wells, which vary in time. The goal in these simulations is to compute the solutions at all time steps for a given time-dependent source term in an efficient way, which is not addressed before.

Many machine learning and multiscale methods are designed for solving multiscale parametric PDEs \cite{chung2021multi, 10.2118/173271-PA, zhang2020learning, chung2020multi, lin2021multi, lin2021accelerated}. 
The advantage of machine learning is that it treats  PDEs as an input-to-output map. By training, and possibly, using the observed data, machine learning avoids computing solutions for each input (such as parameters in boundary conditions, permeability fields or sources). In this paper, we use machine learning to learn the difficult-to-compute part of the solution that is, in general, computed implicitly. This is a coarse-grid part of the solution, which can also be observed given
some dynamic data. The splitting algorithm gives two coupled equations. The first equation for $u_1$ is implicit while the second equation for $u_2$ is explicit. We design a neural network to predict the first component of the solution. Then, we calculate the second component using the discretized equations. The inputs of our neural network are the parameters that characterize the source term of the equation. The outputs are the whole history of $u_1$, i.e. $u_1$ at all time steps. The output dimension of our neural network is the dimension of $u_1$ times the number of time steps, which is considerably large. Therefore, we use Proper Orthogonal Decomposition (POD) \cite{10.2118/173271-PA, chung2021computational, Kunisch2001GalerkinPO} to reduce the dimension of the solution space. We concatenate all  solution vectors of every training input together to generate a large-scale matrix. After doing the singular value decomposition, we select several dominant eigenvectors to form the POD basis vectors. Thus, the output becomes the coordinate vectors based on the POD basis vectors and its dimension is greatly reduced, so is the number of parameters inside the neural networks.

In this paper, we show several numerical examples to illustrate our learning framework which is based on the partially explicit scheme. In our numerical examples, the diffusion term $f(u)$ is $-div(\kappa(x,u) \nabla u)$ and the reaction term is $g(x,t)$. $g(x,t)$ is a source term that is nonzero on several coarse blocks and is zero elsewhere. We choose such $g(x,t)$ as it simulates the wells in petroleum applications. By comparing with the solution which is obtained using fine grid basis functions, we find that the relative $L^2$ error is small and similar to the scheme where we calculate both components. Thus, the learning scheme can achieve similar accuracy while saving computation.

This paper is organized as follows. We present preliminaries and review the partially explicit splitting scheme in Section \ref{secPS}. In Section \ref{secML}, we show details about our learning framework. We present numerical experiments in Section \ref{secNR}.

%Nonlinear multiscale problems, where they arise and what they have, and stiffness

%Challenges in stiff systems, choosing small time steps,...

%multiscale methods

% splitting algorithms

%Our approaches

%our work, our contribution

\section{Problem Setting}
\label{secPS}
We consider the following parabolic partial differential equation
\begin{align}
    u_t + f(u) = g(x,t).
    \label{eq2_1}
\end{align}
Let $V$ be a Hilbert space. In this equation, $f = \cfrac{\delta F}{\delta u} \in V^{\ast}$ with $F$ being the variational derivative of energy $F(u) := \int_{\Omega} E(u)$. In this paper, we use $(\cdot , \cdot)$ to denote $(\cdot,\cdot)_{V^*,V}$ for simplicity. We assume that $f$ is contrast dependent (linear or nonlinear). Here, $g(x,t)$ is a time dependent source term with parameters. 
The weak formulation of Equation (\ref{eq2_1}) is the following. We want to find $u\in V$ such that
\begin{align}
    (u_t, v) + (f(u), v) = (g(x,t), v) \quad \forall v \in V.
    \label{weakfor0}
\end{align}
The $(\cdot,\cdot)$ is the $L^2$ inner product.
We assume that 
\begin{align}
\label{assumptionf}
(f(v),v) > 0,\quad \forall v \in V.
\end{align}
\begin{example}
In the heat equation, $F(u^{\prime}) = \cfrac{1}{2} \int_{\Omega} \kappa |\nabla u^{\prime}|^2$. Let $V = H^1(\Omega)$. We have $f(u) = \cfrac{\delta F}{\delta u^{\prime}}(u) \in (H^1(\Omega))^* = H^1(\Omega)$.
Then,
\[ (\cfrac{\delta F}{\delta u^{\prime}} (u),v) = \int_{\Omega} \kappa \nabla u \cdot \nabla v.  \]
Therefore, we have
\[ f(u) = -\nabla \cdot (\kappa \nabla u).   \]
The weak formulation (\ref{weakfor0}) becomes 
\[ (u_t , v) - (\nabla \cdot (\kappa \nabla u) , v) = (g(x,t),v).  \]
\end{example}

The standard approach to solving this problem is the finite element method. 
Let $V_H$ be the finite element space and $u_H \in V_H$ is the numerical solution satisfying
\begin{align}
    (u_{H,t}, v) + (f(u_H), v) = (g(x,t), v) \quad \forall v \in V_H.
    \label{refsol}
\end{align}
The Backward Euler (implicit) time discretization for this Equation is 
\begin{align}
    (\frac{u^{n+1}_{H}- u^n_H}{\Delta t } , v) + ( f(u^{n+1}_H) , v) = 
    (g(x,t^{n+1}) , v) \quad \forall v \in V_H.
    \label{refsol1}
\end{align}
The Forward Euler (explicit) time discretization for Equation (\ref{refsol}) is 
\begin{align}
    (\frac{u^{n+1}_{H}- u^n_H}{\Delta t } , v) + ( f(u^{n}_H) , v) = 
    (g(x,t^{n}) , v) \quad \forall v \in V_H.
\end{align}
One can find details about the stability analysis of the Backward and Forward Euler method in \cite{chung2021contrastindependent}.

\subsection{Partially Explicit Scheme}
In this paper, we use the partially explicit splitting scheme introduced in \cite{chung2021contrastindependent}.
We split the space $V_H$ to be the direct sum of two subspaces $V_{H,1}$ and $V_{H,2}$, i.e. $V_H = V_{H,1} \oplus V_{H,2}$.
Then the solution $u_H=u_{H,1}+u_{H,2}$ with $u_{H,1}\in V_{H,1}$ and $u_{H,2}\in V_{H,2}$ satisfies
\begin{align*}
    ((u_{H,1}+u_{H,2})_t, v_1) + (f(u_{H,1}+u_{H,2}), v_1) = (g(x,t), v_1) \quad \forall v_1 \in V_{H,1},\\
    ((u_{H,1}+u_{H,2})_t, v_2) + (f(u_{H,1}+u_{H,2}), v_2) = (g(x,t), v_2) \quad \forall v_2 \in V_{H,2}.
\end{align*}
We use the partially explicit time discretization for this problem.
Namely, we consider finding $\{u_H^n\}_{n=0}^K$ (with $u_H^n = u_{H,1}^n + u_{H,2}^n,\; \forall n$) such that
\begin{align}
&(\frac{u_{H, 1}^{n+1}-u_{H, 1}^{n}}{\Delta t}+\frac{u_{H, 2}^{n}-u_{H, 2}^{n-1}}{\Delta t}, v_{1})+(f(u_{H, 1}^{n+1}+u_{H, 2}^{n}),v_1) = (g(x,t^n), v_{1}) \quad\forall v_{1} \in V_{H, 1},
\label{pe1}\\
&(\frac{u_{H, 1}^{n}-u_{H, 1}^{n-1}}{\Delta t}+\frac{u_{H, 2}^{n+1}-u_{H, 2}^{n}}{\Delta t}, v_{2})+(f(u_{H, 1}^{n+1}+u_{H, 2}^{n}),v_2) = (g(x,t^n), v_{2}) \quad\forall v_{2} \in V_{H, 2} .
\label{pe2}
\end{align}
We refer to \cite{chung_partial_expliict21,chung2021contrastindependent} for the details of convergence assumptions and stability analysis of the above scheme.

\subsubsection{Construction of $V_{H,1}$ and $V_{H,2}$}
\label{SpaceConstructSection}

In this section, we present how to construct the two subspaces for our partially explicit splitting scheme.
We will first present the CEM method which is used to construct $V_{H,1}$.
Next, with the application of eigenvalue problems, we build $V_{H,2}$.
In the following discussion, let $S \subset \Omega$ and then we define $V(S) = H_0^1(S)$.

\textbf{CEM method}

We introduce the CEM finite element method in this section.
%The CEM finite element space is constructed by solving
%a constrained energy minimization problem. 
Let $\mathcal{T}_{H}$
be a coarse grid partition of $\Omega$. For $K_{i}\in\mathcal{T}_{H}$,
we need to construct a collection of auxiliary basis functions in $V(K_{i})$.
We assume that  $\{\chi_i\}$ are basis functions that
form a  partition of unity, e.g., piecewise linear functions,
see \cite{bm97}.
We first find $\lambda_j^{(i)}$ and $\psi_j^{(i)}$ such that the following equation holds:
\begin{align*} \int_{K_i} \kappa \nabla \psi_j^{(i)} \cdot \nabla v = \lambda_j^{(i)} s_i ( \psi_j^{(i)},v), \quad
\forall v \in V(K_i), \end{align*}
where \begin{align*} s_i(u,v) = \int_{K_i} \tilde{\kappa} u v, \quad
\tilde{\kappa} = \kappa H^{-2} \; \text{or} \; 
\tilde{\kappa} = \kappa \sum_{i}\left|\nabla \chi_{i}\right|^{2}.   \end{align*}  
With rearrangement if necessary, we select the $L_i$ eigenfunctions of the corresponding first $L_i$ smallest eigenvalues. 
We define \[ V_{aux}^{(i)}:=\text{span}\{\psi_{j}^{(i)}:\;1\leq j\leq L_{i}\}.  \]
Define a  projection operator $\Pi:L^{2}(\Omega)\mapsto V_{aux}\subset L^{2}(\Omega)$
\[
s(\Pi u,v)=s(u,v),\quad\forall v\in V_{aux}:=\sum_{i=1}^{N_{e}}V_{aux}^{(i)},
\]
with $s(u,v):=\sum_{i=1}^{N_{e}}s_{i}(u|_{K_{i}},v|_{K_{i}})$ and $N_e$ being the number of coarse elements.
We denote an oversampling domain which is several coarse blocks larger than $K_i$ as $K_{i}^+$\cite{chung2018constraint}.  
For every auxiliary basis $\psi_{j}^{(i)}$, we solve the following two equations and find $\phi_{j}^{(i)}\in V(K_{i}^{+})$ and $\mu_{j}^{(i)} \in V_{aux}$ 
\begin{align*}
a(\phi_{j}^{(i)},v)+s(\mu_{j}^{(i)},v) & =0,\quad\forall v\in V(K_{i}^{+}),\\
s(\phi_{j}^{(i)},\nu) & =s(\psi_{j}^{(i)},\nu),\quad\forall\nu\in V_{aux}(K_{i}^{+}).
\end{align*}
Then we can define the CEM finite element space as 
\begin{align*}
%V_{glo} & :=\text{span}\{\phi_{glo,j}^{(i)}:\;1\leq i\leq N_{e},1\leq j\leq L_{i}\},\\
V_{cem} & :=\text{span}\{\phi_{j}^{(i)}:\;1\leq i\leq N_{e},1\leq j\leq L_{i}\}.
\end{align*}
%The CEM solution $u_{cem}$ is given by
%\begin{align*}
%(u_{glo,t},v) & =a(u_{glo},v)\;\forall v\in V_{glo},\\
%({\partial^2\over\partial t^2}(u_{cem}),v) & = -a(u_{cem},v)\;\forall v\in V_{cem}.
%(\cfrac{ \partial u_{cem} }{\partial t} ,v ) + ( f(u_{cem}),v) + (g(u_{cem}) , v ) = 0, \quad \forall v\in V_{cem}.
%\end{align*}
We define $\tilde{V}:= \{v\in V: \; \Pi(v) = 0 \}$ and we will need it when we construct $V_{H,2}$ in the next subsection.

\textbf{Construction of $V_{H,2}$}

We build the second subspace $V_{H,2}$ based on $V_{H,1}$ and $\tilde{V}$. Similarly, we first have to construct the auxiliary basis functions by solving the eigenvalue equations. For every coarse element $K_{i}$, we search for $(\xi_{j}^{(i)},\gamma_{j}^{(i)})\in(V(K_{i})\cap\tilde{V})\times\mathbb{R}$ such that
\begin{align}
\label{eq:spectralCEM2}
\int_{K_{i}}\kappa\nabla\xi_{j}^{(i)}\cdot\nabla v & =\gamma_{j}^{(i)}\int_{K_{i}}\xi_{j}^{(i)}v, \;\ \forall v\in V(K_{i})\cap\tilde{V}.
\end{align}
We choose the $J_i$ eigenfunctions corresponding to the smallest $J_i$ eigenvalues.
The second type of auxiliary space is defined as $V_{aux,2} := \text{span}\{\xi_j^{(i)} : 1\leq i \leq N_e, 1\leq j\leq J_i \}$.
For every auxiliary basis $\xi_j^{(i)} \in V_{aux,2}$, we look for a basis function $\zeta_{j}^{(i)} \in V(K_i^+)$ such
that for some $\mu_{j}^{(i),1} \in V_{aux}$, $ \mu_{j}^{(i),2} \in V_{aux,2}$, we have 
\begin{align}
a(\zeta_{j}^{(i)},v)+s(\mu_{j}^{(i),1},v)+ ( \mu_{j}^{(i),2},v) & =0, \quad\forall v\in V(K_i^+), \label{eq:v2a} \\
s(\zeta_{j}^{(i)},\nu) & =0, \quad\forall\nu\in V_{aux}, \label{eq:v2b} \\
(\zeta_{j}^{(i)},\nu) & =( \xi_{j}^{(i)},\nu), \quad\forall\nu\in V_{aux,2}. \label{eq:v2c}
\end{align}
We define $$V_{H,2}=\text{span}\{\zeta_{j}^{(i)}| \; 1\leq i \leq N_e, \;  1 \leq j\leq J_i\}.$$

\section{Machine Learning Approach}
\label{secML}

In Equation (\ref{eq2_1}), we consider $g(x,t)$ as a source term with varying parameters representing temporal changes. For every parameter, we want to find the corresponding solution $u(x,t)$. 
For the traditional finite element methods, we need to construct meshes and basis functions, assemble the stiffness and mass matrices and solve the matrices equations, which makes the computation cost large, especially for the nonlinear PDEs as we have to assemble the stiffness matrix at every time step. Designing machine learning algorithms for the partially explicit scheme can alleviate this problem. Once the neural network is trained, the computation of solution becomes the multiplication of matrices and applying activation functions, which is more efficient in contrast to the finite element methods.

 The Equation (\ref{pe1}) is implicit for $u^{n+1}_{H,1}$, which makes the computations more complicated. We design a machine learning algorithm to predict $u^{n+1}_{H,1}$ and then we compute $u^{n+1}_{H,2}$ using Equation (\ref{pe2}). 

%Designing machine learning algorithms for the partially explicit scheme can also alleviate this problem.\marginpar{How to alleviate the problem?} \textbf{After we have trained the neural network, when we observe a new parameters, the neural network will be able to predict the corresponding solution.  }

\begin{figure}[H]
\centering
\includegraphics[width = 10cm]{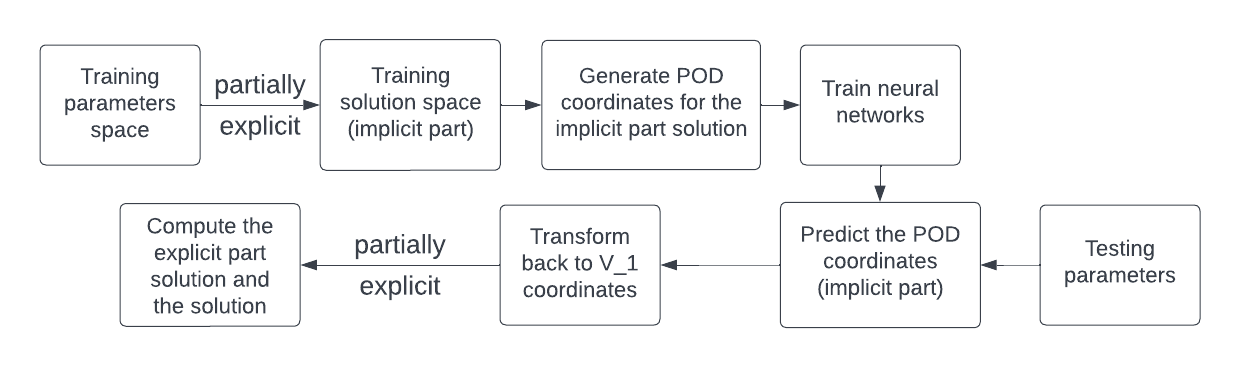}
\caption{Flowchart. }
\label{flowchart}
\end{figure}

We present a flowchart in Figure \ref{flowchart} to show the workflow of our method. We first need to generate the dataset which will be used to train the neural network. Let $w_m = (w_{m1}, w_{m2}, \cdots, w_{mk})$ be the parameters in $g_{w}(x,t)$. We set $w_{mi} \in [L,U]$ for $i = 1, 2,\cdots, k$ with $L$ and $U$ being the lower bound and upper bound, respectively. We generate the training space $\{w_m\}_{m=1}^M$ by drawing samples from a uniform distribution over the interval $[L,U]$. For every $w_m$ in the training space, we calculate its corresponding partially explicit solution using Equations (\ref{pe1}) and (\ref{pe2}). Let $N$ be the number of time steps and $M$ be the size of our training sample space. We denote this solution space as $\{\{u_{H,m}^n\}_{n=2}^N\}_{m=1}^M$, where $u_{H,m}^n = u_{H,m,1}^n + u_{H,m,2}^n$. We remark that the reason why $n$ starts from $2$  is that we need $u^0$ and $u^1$ to compute $u^2$ in the partially explicit scheme. The $u^1$ is computed by the Backward Euler scheme using basis functions from both $V_{H,1}$ and $V_{H,2}$. We only need $\{\{u_{H,m,1}^n\}_{n=2}^N\}_{m=1}^M$ for our machine learning algorithm. Before we move on, we discuss a useful tool called Proper Orthogonal Decomposition \cite{10.2307/24103957}.

For a given set of vectors $\{ x_i \}_{i=1}^c$ with $x_i \in \mathbb{R}^d$, we want to construct a smaller dimensional vector space that can represent the space spanned by $\{x_i\}_{i=1}^c$ accurately, i.e., we want to find orthonormal vectors $\{y_j\}_{j=1}^{l}$ that solve
\begin{align}
\min\limits_{y_1,\cdots,y_l } \sum_{j=1}^c \|x_j - \sum_{i=1}^l \langle x_j,y_l \rangle_{l^2} y_l \|^2_{l^2},
\label{minimization}
\end{align}
where $l\leq r$ and $r$ is the rank of $(x_1, x_2, \cdots, x_c)$.
We apply the singular value decomposition for this. Next, we briefly remind the POD procedure \cite{10.2307/24103957}. We first define a matrix $X = (x_1, x_2, \cdots, x_c)$ whose dimension is $d \times c$. Then $X^T X$ and $X X^T$ are $c\times c$ and $d\times d$ dimensional matrices. Consider the eigenvalues and eigenvectors of $X^T X$ as well as $X X^T$, 
\[ X^T X z_i = \sigma_i^2 z_i , \quad XX^T y_i = \sigma_i^2 y_i.   \]
Next, we define
\[ Y = (y_1, \cdots, y_d),\quad Z = (z_1, \cdots, z_c). \]
We now have 
\[ Y^T X Z = \Sigma, \]
where $\Sigma$ is the diagonal matrix whose main diagonal entries are $\sigma_i^2$. After doing rearrangement if necessary, we may assume 
\[ \sigma_1^2 \geq \sigma_2^2 \geq \cdots > 0. \]
The solution of (\ref{minimization}) is given by the $l$ eigenvectors of $X X^T$ corresponding to the largest $l$ eigenvalues.
Thus, the desired $l$ dimensional subspace  is the subspace spanned by eigenvectors corresponding to the largest $l$ eigenvalues.

There are $N\times M$ column vectors ($\in \mathbb{R}^{\dim(V_{H,1})}$) in the solution space $\{\{u_{H,m}^n\}_{n=2}^N\}_{m=1}^M$. We would like to output the solution $\{u_{H}^n\}_{n=2}^N$ at the same time so that we don't have to do the iteration for every time step recursively. In this case, the output dimension is $\dim(V_{H,1}) \times (N-1)$, which is too large ($\dim(V_{H,1})$ is $300$ in our examples). As a result, the number of parameters in the neural network will also be large, which increases the computation cost.  We define 
\begin{align*}
S = (u_{H,1,1}^2,u_{H,1,1}^3,\cdots, u_{H,1,1}^N, u_{H,2,1}^2, u_{H,2,1}^3, \cdots, u_{H,2,1}^N, \cdots, u_{H,M,1}^2, u_{H,M,1}^3, \cdots, u_{H,M,1}^N  ).
\end{align*}
Hence, $\dim(S) = \dim(V_{H,1}) \times ((N-1) \cdot M)$.
After the singular value decomposition to $S$, we define
\[ P = (p_1 , p_2 , \cdots, p_l), \]
where $p_i \in \mathbb{R}^{\dim(V_{H,1})}$ is the eigenvector corresponding to the $i$-th largest eigenvalue. We use $P$ to get a reduced dimensional coordinate vectors for every $u_{H,m,1}^n$,i.e.
\begin{align}
\label{PODdef}
u_{H,m,1}^{n,\prime} = (P^T P)^{-1} P^T u_{H,m,1}^n \in \mathbb{R}^l
\end{align} 
Thus, the output dimension is reduced to $l \times (N-1)$. In the testing stage, after we get the output $\{u_{H,1}^{n,\prime}\}_{n=2}^N$, we use $P$ to get the coarse grid solution $\{u_{H,1}^{n} = Pu_{H,1}^{n,\prime}\}_{n=2}^N$. 
%We remark that the error because the POD transformation is very small ($<0.5\%$) when we select enough basis (\textcolor{red}{which example? How this percentage is computed? How many basis do you choose to reach $<0.5\%$}). 

As discussed above, in our neural networks, the inputs are the parameters $w$ in $g_{w}(x,t)$ and outputs are $\{u_{H,m,1}^{n,\prime}\}_n$. That is
\begin{align*}
\{u_{H,m,1}^{n,\prime}\}_n = \mathcal{N} (w),
\end{align*}
where $\mathcal{N}$ is the neural network. To better illustrate,
\begin{align*}
Y_1 = \phi_1(w W_1 + b_1),
\end{align*}
where $Y_1$, $\phi_1$, $w$, $W_1$ and $b_1$ are the output of the first layer, the activation function, inputs, weight matrix and the biased term. Here, $Y_1$, $w$ and $b_1$ are row vectors. Similarly,
\begin{align*}
Y_{i+1} = \phi_{i+1}( Y_i W_{i+1} + b_{i+1})
\end{align*}
and 
\begin{align*}
Y_I= Y_{I-1} W_I + b_I,
\end{align*}
where $I$ is the number of layers and $i = 1,2,\cdots, I-1$. Then, we can obtain $\{u_{H,m,1}^{n,\prime}\}_n$ by reshaping $Y_I$.

Next, we would like to prove two theoretical results. Under the setting in Section \ref{secML}, we first define the map from the input to the output of our neural network as
\begin{align*}
h : W &\rightarrow  \mathbb{R}^{l \times (N-1)},\\
 w &\mapsto ( u_{H,w,1}^{2,\prime,1}, u_{H,w,1}^{2,\prime,2}, \dots, u_{H,w,1}^{2,\prime,l}, u_{H,w,1}^{3,\prime,1}, u_{H,w,1}^{3,\prime,2}, \dots, u_{H,w,1}^{3,\prime,l}, \dots, u_{H,w,1}^{N,\prime,1}, u_{H,w,1}^{N,\prime,2}, \dots, u_{H,w,1}^{N,\prime,l} ),
\end{align*}
where $W$ is a compact subset of $\mathbb{R}^k$ with norm $\|\cdot\|_W$.
We will show that the neural network is capable of approximating $h$ precisely. We then show that when the loss function (mean squared error) goes to zero, the output of the neural network converges to the exact solution (which is computed via the partially explicit scheme). We state the following well-known theorem \cite{pmlr-v125-kidger20a}.  
\begin{theorem}
\textbf{Universal Approximation Theorem}. Let $\sigma : \mathbb{R} \rightarrow \mathbb{R}$ be any non-affine continuous function. Also, $\sigma$ is continuously differentiable at at least one point and with nonzero derivative at that point. Let $\mathcal{N}_{k,D}^{\sigma}$ be the space of neural networks with $k$ inputs, $D$ outputs, $\sigma$ as the activation function for every hidden layers and the identity function as the activation function for the output layers. Then given any $\epsilon > 0$ and any function $\mathcal{H} \in C\left(W, \mathbb{R}^{D}\right)$, there exists $\hat{\mathcal{H}} \in \mathcal{N}_{k,D}^{\sigma}$ such that 
\[ \sup_{w\in W} \| \hat{\mathcal{H}} (w) - \mathcal{H}(w) \|_{l_{\infty}(\mathbb{R{^{D}})}} < \epsilon.  \]
\end{theorem}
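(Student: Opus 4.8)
The plan is to reduce the vector-valued, multilayer statement to the classical scalar density theory and then to isolate the one genuinely delicate case that the hypotheses on $\sigma$ are designed to handle. First I would reduce to $D=1$. Given a target $\mathcal{H}=(\mathcal{H}_1,\dots,\mathcal{H}_D)\in C(W,\mathbb{R}^D)$, it suffices to approximate each scalar component $\mathcal{H}_j$ uniformly on $W$ to accuracy $\epsilon$: if $\hat{\mathcal{H}}_j\in\mathcal{N}_{k,1}^{\sigma}$ does so, one runs the $D$ subnetworks in parallel as a single network with block-diagonal weight matrices and a linear output layer that reads off coordinate $j$ from the $j$-th block. Since both the width and the depth of the networks in $\mathcal{N}_{k,D}^{\sigma}$ are unrestricted, this parallel composition is admissible, and $\sup_{w\in W}\|\hat{\mathcal{H}}(w)-\mathcal{H}(w)\|_{\ell_\infty}$ is controlled by the worst component error. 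So from here I assume $D=1$.

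Next I would split on the nature of $\sigma$. If $\sigma$ is \emph{not} a polynomial, the classical theorem of Leshno--Lin--Pinkus--Schocken applies: for a continuous (hence locally bounded) non-polynomial activation, the span of the ridge functions $\{w\mapsto\sigma(a\cdot w+b):a\in\mathbb{R}^k,\,b\in\mathbb{R}\}$ is dense in $C(W)$ in the uniform topology. The engine is a one-dimensional density lemma: after mollifying $\sigma$ to a smooth $\sigma_\phi=\sigma*\phi$, which lies in the uniform closure of dilates and translates of $\sigma$ and is again non-polynomial for a suitable $\phi$, differentiating $\lambda\mapsto\sigma_\phi(\lambda x+\theta)$ in $\lambda$ and passing to difference quotients shows $x^m\sigma_\phi^{(m)}(\theta)$ lies in the closed span for every $m$; choosing $\theta$ with $\sigma_\phi^{(m)}(\theta)\neq0$, possible exactly because $\sigma_\phi$ is not a polynomial, places every monomial, hence by Weierstrass every continuous function on a compact interval, in the closure. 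A standard ridge-function/Stone--Weierstrass argument lifts this to $k$ variables. Because width is unrestricted, a single hidden layer already realises such an approximant, so $\hat{\mathcal{H}}\in\mathcal{N}_{k,1}^{\sigma}$.

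The remaining and genuinely delicate case is when $\sigma$ \emph{is} a polynomial; being non-affine it then has degree $d\ge2$, and a single hidden layer cannot suffice since it outputs only polynomials of degree $\le d$. Depth is therefore essential, and this is the step where I expect the real difficulty. I would first observe that, inside the finite-dimensional space of polynomials of degree $\le d$, the span of the single-neuron functions $\{w\mapsto\sigma(a\cdot w+b)\}$ is closed, and by differentiating $\sigma(a\cdot w+b)$ in the weights one finds every monomial of degree $\le d$, in particular the identity, lying in this span exactly. One hidden layer thus reproduces every polynomial of degree $\le d$; composing $L$ such layers and using the identity to carry low-degree terms forward reaches degree $d^L$, so the union over all depths produces every polynomial. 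The hypothesis that $\sigma$ is continuously differentiable at some $x_0$ with $\sigma'(x_0)\neq0$ is precisely what supplies such an identity/carry channel in general (and is what makes the bounded-width register construction of the cited reference go through), since $\tfrac{1}{\delta\sigma'(x_0)}(\sigma(x_0+\delta x)-\sigma(x_0))\to x$ uniformly on compacta as $\delta\to0$. Finally, since polynomials are dense in $C(W)$ for compact $W$ by Stone--Weierstrass, the target $\mathcal{H}$ is approximated within $\epsilon$.

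Assembling the two cases yields the theorem: in either regime one obtains $\hat{\mathcal{H}}\in\mathcal{N}_{k,D}^{\sigma}$ with uniform error below $\epsilon$. The main obstacle, as noted, is the polynomial-activation case, which forces the use of depth and requires both the exact span/closure argument for monomials and the identity-channel lemma justified by the single-point differentiability hypothesis; the non-polynomial case, by contrast, reduces to the classical shallow density theorem once width is left unrestricted.
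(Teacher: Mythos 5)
The paper does not prove this theorem at all: it is quoted verbatim as a known result from the cited reference \cite{pmlr-v125-kidger20a} (Kidger--Lyons), and the paper's own work begins only afterwards, with the continuity of $h$ (Lemma 3.1) and the application of the theorem to get Theorem 3.2. So there is no in-paper proof to compare yours against; what can be judged is whether your argument is a sound self-contained proof of the statement as written, and how it relates to the result actually proved in the reference.

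On that score your proposal is essentially correct, with two caveats. You prove the unbounded-width version: reduction to $D=1$ by parallel blocks, Leshno--Lin--Pinkus--Schocken density when $\sigma$ is not a polynomial, and a depth argument when $\sigma$ is a non-affine polynomial of degree $d \ge 2$ (exact reproduction of all polynomials of degree $\le d$ by one hidden layer, composition of layers, then Stone--Weierstrass). Since the paper's statement of $\mathcal{N}_{k,D}^{\sigma}$ places no bound on width, this is a legitimate route, and under unrestricted width the hypothesis that $\sigma$ is continuously differentiable with nonzero derivative at some point becomes superfluous: LLPS does not need it, and in the polynomial case the identity map lies \emph{exactly} in the span of one hidden layer, so no difference-quotient limit is required. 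The cited reference proves a strictly stronger statement --- density with width fixed at $k+D+2$ --- and there the differentiability hypothesis is genuinely load-bearing (it drives the register-model construction); your proof does not recover that refinement, but the statement as transcribed in the paper does not claim it. The two repairs you should make: (i) in the reduction to $D=1$, the $D$ scalar subnetworks may have different depths, so before stacking them in parallel you must pad the shallower ones with identity-reproducing (polynomial case) or identity-approximating (general case) layers, which is exactly the carry-channel device you invoke later; (ii) ``composing $L$ layers reaches degree $d^L$'' does not by itself yield \emph{all} polynomials of that degree --- the clean argument is that one layer can implement pairwise products via $xy = \tfrac{1}{2}\bigl((x+y)^2 - x^2 - y^2\bigr)$ together with identity channels, so every monomial, hence every polynomial, is computed exactly after finitely many layers, and Stone--Weierstrass then finishes.
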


The universal approximation theorem relies on the continuity assumption of the mapping $\mathcal{H}$. More specifically, to apply the universal approximation theorem, we now need to show that $h$ is continuous.

\begin{lemma}
We define $\|u\|_a^2 = a(u,u) = (f(u),u)$ and assume $f$ is linear.
We know $V_H = V_{H,1} \oplus V_{H,2}$.
The two subspaces $V_{H,1}$ and $V_{H,2}$ are finite dimensional subspaces with trivial intersection. By the strengthened Cauchy Schwarz inequality \cite{aldaz2013strengthened}, there exists a constant $\gamma$ depending on $V_{H,1}$ and $V_{H,2}$ such that 
\begin{align}
0 < \gamma := \sup_{v_1 \in V_{H,1}, v_2 \in V_{H,2}} \cfrac{(v_1,v_2)} { \|v_1\| \|v_2\| } < 1. 
\label{s_CS}
\end{align}

If 
\begin{align}
\label{conditionDeltat}
\Delta t \leq (1-\gamma) \left( \sup_{v_2\in V_{H,2}} \cfrac{\|v_2\|_a^2} { \| v_2 \|^2 }  \right)^{-1},
\end{align}
 then $h$ is continuous with respect to $w\in W$.
 \label{thelemma}
\end{lemma}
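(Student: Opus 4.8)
The plan is to exploit the linearity of $f$ to reduce the continuity of $h$ to a stability estimate for the partially explicit scheme with respect to perturbations of the source, and then to compose with the (linear, hence continuous) POD projection. Fix $w\in W$ and take $w_k\to w$ in $W$. Write $u_H^n=u_{H,1}^n+u_{H,2}^n$ for the solution of (\ref{pe1})--(\ref{pe2}) driven by $g_w$ and $\tilde u_H^n$ for the one driven by $g_{w_k}$, and set $e_i^n:=\tilde u_{H,i}^n-u_{H,i}^n$ for $i=1,2$. Because $f$ is linear and both solutions share the same initial state $u_H^0$, the differences $e_1^n,e_2^n$ satisfy the very same scheme (\ref{pe1})--(\ref{pe2}) with the source replaced by $g_{w_k}(\cdot,t^n)-g_w(\cdot,t^n)$ and with $e_i^0=0$ (the first level $e_i^1$ being the Backward--Euler response to $g_{w_k}(\cdot,t^1)-g_w(\cdot,t^1)$). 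Continuity of $h$ then follows once $\|e_1^n\|\to 0$ for every $n=2,\dots,N$, since by (\ref{PODdef}) the coordinates satisfy $\tilde u_{H,1}^{n,\prime}-u_{H,1}^{n,\prime}=(P^TP)^{-1}P^Te_1^n$, a fixed linear map applied to $e_1^n$, whence $\|h(w_k)-h(w)\|_{l_\infty}\le\|(P^TP)^{-1}P^T\|\,\max_n\|e_1^n\|\to 0$.

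First I would record that each time step is well posed, so the error equation is genuinely generated by the scheme. Testing (\ref{pe1}) with $v_1\in V_{H,1}$, the unknown $e_1^{n+1}$ is determined by the bilinear form $\tfrac{1}{\Delta t}(\cdot,\cdot)+a(\cdot,\cdot)$ on $V_{H,1}$, which is coercive because $\tfrac{1}{\Delta t}(\cdot,\cdot)$ is positive definite and $a(v,v)=\|v\|_a^2\ge 0$ by (\ref{assumptionf}); testing (\ref{pe2}), the unknown $e_2^{n+1}$ is determined by the mass form $\tfrac{1}{\Delta t}(\cdot,\cdot)$ on $V_{H,2}$. Hence the update $(e_1^{n-1},e_1^n,e_2^{n-1},e_2^n)\mapsto(e_1^{n+1},e_2^{n+1})$ is an affine map whose matrices depend only on the discretization and not on $w$, the sole $w$-dependence entering through the load functionals $(g_{w_k}(\cdot,t^n)-g_w(\cdot,t^n),\,\cdot\,)$.

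The heart of the argument is the stability estimate that turns a small source perturbation into a small solution perturbation uniformly in $n$. I would test (\ref{pe1}) with $v_1=e_1^{n+1}-e_1^n$ and (\ref{pe2}) with the analogous increment, add the two identities, and assemble a discrete energy built from $\|e_1^n\|_a^2$ together with the coupling terms. The explicit (lagged) treatment of the $V_{H,2}$ part produces a contribution that must be absorbed, and this is exactly where the two standing hypotheses enter: the strengthened Cauchy--Schwarz bound (\ref{s_CS}) controls the $V_{H,1}$--$V_{H,2}$ cross terms by a factor $\gamma<1$, leaving a positive margin $1-\gamma$, while the time-step restriction (\ref{conditionDeltat}) guarantees $\Delta t\,\sup_{v_2}\|v_2\|_a^2/\|v_2\|^2\le 1-\gamma$, so the explicitly-lagged $u_{H,2}$ contribution is dominated by that margin. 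Together they yield a nonnegative energy that is non-increasing up to the forcing, producing a bound of the shape $\max_{2\le n\le N}\|e^n\|^2\le C\big(\|e^1\|^2+\sum_n\Delta t\,\|g_{w_k}(\cdot,t^n)-g_w(\cdot,t^n)\|^2\big)$ with $C$ independent of $k$.

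Finally I would invoke continuity of the source in its parameters: since $W$ is compact and $g_w(\cdot,t^n)$ depends continuously on $w$, each $\|g_{w_k}(\cdot,t^n)-g_w(\cdot,t^n)\|$ and the initial response $\|e^1\|$ vanish as $w_k\to w$, so the right-hand side of the estimate tends to $0$ and $\max_n\|e^n\|\to 0$; combined with the linearity of the POD projection this gives $h(w_k)\to h(w)$. The main obstacle is precisely the stability estimate of the third paragraph: choosing test functions and a discrete energy so that the explicit $V_{H,2}$ terms are absorbed through $\gamma$ and the CFL-type bound (\ref{conditionDeltat}), rather than the routine bookkeeping of the affine recursion or the POD step. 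This is essentially the contrast-independent stability analysis of \cite{chung2021contrastindependent} re-run for the error equation, so I would either adapt that argument or cite it for the homogeneous part and supply only the forcing terms.
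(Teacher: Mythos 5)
Your proposal follows essentially the same route as the paper's own proof: factor $h$ through the continuous POD projection, reduce continuity of the solution map to a stability estimate for the error equation (which, by linearity of $f$, is the same partially explicit scheme driven by the source difference), treat the first Backward--Euler step separately, and prove the key energy estimate by testing with the increments $e_i^{n+1}-e_i^n$, controlling cross terms with the strengthened Cauchy--Schwarz inequality (\ref{s_CS}) and absorbing the lagged explicit contribution with the CFL condition (\ref{conditionDeltat}). The only cosmetic discrepancy is that the paper's discrete energy carries the $a$-norm of the full error $e^n=e_1^n+e_2^n$ rather than of $e_1^n$ alone, which does not alter the argument.
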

The proof is given in Appendix \ref{appendixA}.
With the continuity of $h$, we have the following theorem.
\begin{theorem}
\label{theorem1}
For any $\epsilon > 0$, there exists $\hat{h} \in \mathcal{N}_{k,l(N-1)}^{\sigma}$ such that
    \[ \sup_{w\in W} \| \hat{h}(w) - h(w) \|_{l_{\infty}(\mathbb{R}^{l(N-1)})} < \epsilon/3. \] 
\end{theorem}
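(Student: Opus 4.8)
The plan is to apply the Universal Approximation Theorem directly to the map $h$, drawing on Lemma \ref{thelemma} to supply the continuity hypothesis that the theorem requires. Since the Universal Approximation Theorem applies to \emph{any} $\mathcal{H} \in C(W, \mathbb{R}^D)$, the whole task reduces to certifying that $h$ lies in this class and then identifying the output dimension $D$ with the dimension of $h$.

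First I would invoke Lemma \ref{thelemma}. Provided the step-size condition (\ref{conditionDeltat}) holds, that lemma guarantees $h$ is continuous with respect to $w \in W$. Because $W$ is a compact subset of $\mathbb{R}^k$ and $h$ takes values in $\mathbb{R}^{l(N-1)}$, this says exactly $h \in C(W, \mathbb{R}^{l(N-1)})$, which is the precise form of the input required by the theorem.

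Next I would set $D = l(N-1)$ and apply the Universal Approximation Theorem with $\mathcal{H} = h$, choosing the approximation tolerance to be $\epsilon/3$ in place of $\epsilon$. The activation $\sigma$ is assumed non-affine, continuous, and continuously differentiable with nonzero derivative at some point, so the hypotheses of the theorem are met; it then produces an $\hat{h} \in \mathcal{N}_{k,l(N-1)}^{\sigma}$ with $\sup_{w\in W} \|\hat{h}(w) - h(w)\|_{l_{\infty}(\mathbb{R}^{l(N-1)})} < \epsilon/3$, which is precisely the claimed estimate.

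The real work sits upstream rather than in this statement: all of the difficulty is absorbed into proving continuity of $h$ in Lemma \ref{thelemma}, where one must track how the partially explicit iterates (\ref{pe1})--(\ref{pe2}) and the POD coordinates (\ref{PODdef}) depend continuously on the source parameter $w$, and where the step-size bound enters to keep the explicit update well-behaved. The factor $\epsilon/3$ is not accidental: it signals that this theorem is only the neural-network approximation term in a later three-part triangle-inequality estimate, so I would deliberately keep the tolerance at $\epsilon/3$ so that it combines cleanly with the POD-truncation and time-discretization errors to yield a total bound of $\epsilon$.
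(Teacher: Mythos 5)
Your proposal is correct and takes essentially the same approach as the paper, which likewise obtains Theorem \ref{theorem1} by combining the continuity of $h$ from Lemma \ref{thelemma} with the Universal Approximation Theorem applied with output dimension $D = l(N-1)$ and tolerance $\epsilon/3$. (One minor point of interpretation: in the paper's subsequent generalization-error theorem, the three $\epsilon/3$ terms are the uniform-continuity moduli of $\hat{h}$ and of $h$ plus the network's approximation error at a nearby training sample, not POD-truncation and time-discretization errors.)
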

The theorem follows from the continuity of $h$ and the Universal Approximation Theorem. Finally, we present the estimation of the generalization error. 
%\begin{theorem}
%Under the setting of Section \ref{secML} and let $\hat{h} \in \mathcal{N}_{k,l(N-1)}^{\sigma}$, when the loss function $\frac{1}{M} \sum_{i=1}^M \| \hat{h}(w_i) - h(w_i)\|_{l_2}$ goes to $0$, the neural network approximate the function $h$ precisely. Strictly speaking, for all $\epsilon > 0$, there exists $\delta > 0 $, such that $\frac{1}{M} \sum_{i=1}^M \| \hat{h}(w_i) - h(w_i)\|_{l_2} < \delta$ implies
%\[ \sup_{w\in W} \| \hat{h}(w) - h(w) \|_{l_{\infty}(\mathbb{R}^{l(N-1)})} < \epsilon. \]
%\end{theorem}
%\begin{proof}
%\textcolor{red}{This can be proved using the equivalence of norms in finite dimensional space. }
%\end{proof}

\begin{theorem}
Let $W_T\subset W$ be the set containing all training samples. Fix $\epsilon > 0$. The functions $h$ and $\hat{h}$ are uniformly continuous since they are continuous and $W$ is compact.
It follows that there exists $ \delta > 0$ such that
\begin{align*}
\| \hat{h}(w^{\prime}) - \hat{h} (w^{\prime\prime}) \|_{l_{\infty}} < \epsilon/3, \quad
\| h(w^{\prime}) - h (w^{\prime\prime}) \|_{l_{\infty}} < \epsilon/3,
\end{align*}
for any $w', w''\in W$ and $\|w'-w''\|<\delta$.
We assume that for any $w\in W$, there exists $w_i \in W_T$ such that $\| w - w_i \|_W < \delta$. Let $\hat{h} \in \mathcal{N}_{k,l(N-1)}^{\sigma}$ be the mapping in Theorem \ref{theorem1}. Then we have
\[ \sup_{w\in W} \| \hat{h}(w) - h(w) \|_{l_{\infty}(\mathbb{R}^{l(N-1)})} < \epsilon. \]
\end{theorem}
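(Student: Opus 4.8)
The plan is to reduce the global approximation bound to a triangle inequality centered at a nearby training sample, i.e., the standard $\epsilon/3$ argument for generalization error. First I would fix an arbitrary $w \in W$ and invoke the density hypothesis to select a training point $w_i \in W_T$ with $\|w - w_i\|_W < \delta$, where $\delta$ is the common modulus of uniform continuity furnished in the statement for both $h$ and $\hat{h}$.

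Next I would split the error at $w$ through $w_i$:
\begin{align*}
\|\hat{h}(w) - h(w)\|_{l_\infty} \leq \|\hat{h}(w) - \hat{h}(w_i)\|_{l_\infty} + \|\hat{h}(w_i) - h(w_i)\|_{l_\infty} + \|h(w_i) - h(w)\|_{l_\infty}.
\end{align*}
The first and third terms are each below $\epsilon/3$ directly from the uniform continuity estimates, since $\|w - w_i\|_W < \delta$. The middle term is the network approximation error evaluated at the training point $w_i$; bounding it by $\epsilon/3$ is exactly the content of Theorem \ref{theorem1}, whose supremum over $W$ specializes to $w_i \in W_T \subset W$. Summing the three contributions yields $\|\hat{h}(w) - h(w)\|_{l_\infty} < \epsilon$ for this fixed $w$.

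Finally, since $w$ was arbitrary, the pointwise bound holds on all of $W$. To upgrade the pointwise strict inequality to a strict supremum bound I would use compactness of $W$: the map $w \mapsto \|\hat{h}(w) - h(w)\|_{l_\infty}$ is continuous, being the composition of the continuous maps $h$ and $\hat{h}$ with a norm, so it attains its maximum at some $w^\ast \in W$, and the pointwise estimate at $w^\ast$ then gives $\sup_{w \in W} \|\hat{h}(w) - h(w)\|_{l_\infty} < \epsilon$.

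I do not expect a genuine obstacle here; the argument is a clean $\epsilon/3$ decomposition. The only points requiring a little care are (i) ensuring that a single $\delta$ controls both $h$ and $\hat{h}$ simultaneously, which the hypotheses already arrange, and (ii) the passage from a pointwise strict inequality to a strict supremum, which needs the compactness/attainment remark rather than the naive claim that a supremum of quantities each below $\epsilon$ is itself below $\epsilon$. It is worth noting that the conclusion is in fact already implied by the uniform bound of Theorem \ref{theorem1} alone; the value of the present statement is that it isolates the three mechanisms — approximation accuracy at the samples, sample density, and equicontinuity of $h$ and $\hat{h}$ — that jointly govern the generalization error away from the training set.
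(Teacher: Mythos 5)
Your proof is correct and follows essentially the same $\epsilon/3$ triangle-inequality decomposition through a nearby training sample as the paper's own proof. Your extra care in upgrading the pointwise strict inequality to a strict supremum bound via compactness (and your observation that Theorem \ref{theorem1} alone already implies the conclusion, since its uniform bound of $\epsilon/3$ holds over all of $W$) goes slightly beyond the paper, which simply concludes from the arbitrariness of $w$.
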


\begin{proof}
For any $w\in W$, we can find $w_i \in W_T$ such that $\| w - w_i \|_W < \delta$, it follows that:
\begin{align*}
\| \hat{h} (w) - h(w) \|_{l_{\infty}} 
&= \| \hat{h} ( w ) - \hat{h}(w_i) + \hat{h}(w_i) - h(w_i) + h(w_i) - h(w) \|_{l_{\infty}} \\
&\leq  \| \hat{h} ( w ) - \hat{h}(w_i) \|_{l_{\infty}} + \| \hat{h}(w_i) - h(w_i) \|_{l_{\infty}} 
		+ \| h(w_i) - h(w) \|_{l_{\infty}} \\
&\leq \epsilon/3 + \epsilon/3 + \epsilon/3 = \epsilon.
\end{align*}

As $\epsilon$ and $w$ are arbitrary, we obtain the conclusion. 
\end{proof}

%Let $v_1 = r_{1}^{n+1} $, we obtain
%\begin{align*}
%\| r_{1}^{n+1} \|^2 = \Delta t (g_{w_1}(x,t^n) - g_{w_2}(x,t^n), r_{1}^{n+1} ) 
%+ ( u_{ 1,w_1}^{n} - u_{1,w_2}^{n},  r_{1}^{n+1}) \\
%+ ( -(r_2^n )  , r_{1}^{n+1}) 
%+ ( u_{ 2,w_1}^{n-1} - u_{ 2,w_2}^{n-1} , r_{1}^{n+1})  \\
%\leq \|  r_{1}^{n+1} \| ( \Delta t \|  g_{w_1}(x,t^n) - g_{w_2}(x,t^n) \| + \\
%\| u_{ 1,w_1}^{n} - u_{1,w_2}^{n} \| + \| r_2^n \| + \| u_{ 2,w_1}^{n-1} - u_{ 2,w_2}^{n-1} \| ) 
%% constant will accumulate and blow up. 
%\end{align*} 

%\newpage
%\begin{align*}
%\frac{\gamma}{2\Delta t} \sum_i \| r_{1}^{n+1} - (u_{ 1,w_1}^{n} - u_{1,w_2}^{n}) \|^2 
%+ F( r_{1}^{n+1} ) + G( r_{1}^{n+1} ) \\ \leq
%\frac{\gamma}{2\Delta t} \sum_i \| r_{1}^{n} - (u_{ 1,w_1}^{n-1} - u_{1,w_2}^{n-1}) \|^2 
%+ F( r_{1}^{n} ) + G( r_{1}^{n} )
%\end{align*}

\section{Numerical Results}
\label{secNR}
In this section, we present numerical examples.
In all cases, the domain $\Omega$ is $[0,1]^2$. The permeability field $\kappa$ we use is shown in Figure \ref{figkappa}. Note that there are high contrast channels in the permeability field. The fine mesh size is $\sqrt{2}/100$ and the coarse mesh size is $\sqrt{2}/10$.

\begin{figure}[H]
\centering
\includegraphics[width = 6cm]{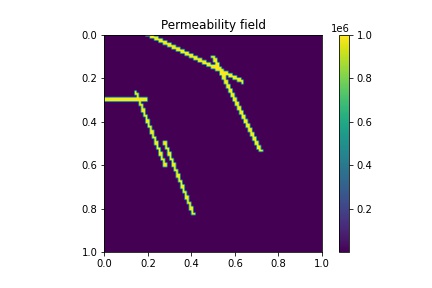}
\caption{The permeability field $\kappa$.}
\label{figkappa}
\end{figure}

We define the following notation.

\begin{itemize}
    \item $u_{H,l}$ is obtained by predicting $u_{H,1}$ and computing $u_{H,2}$ by Equation (\ref{pe2}).
    \item $u_{H,c}$ is obtained by computing $u_{H,1}$ and $u_{H,2}$ by Equation (\ref{pe1}) and (\ref{pe2}).
    \item $u_{H,f}$ is obtained using the fine grid basis functions and Equation (\ref{refsol1}). 
\end{itemize}
We also need to define the following relative error for better illustration.
Let $\|\cdot\|$ be the $L^2$ norm. We know that $u_{H,l}=u_{H,l,1}+u_{H,l,2}$ and $u_{H,c} = u_{H,c,1} + u_{H,c,2}$.
\begin{itemize}
\item $e_1 = \cfrac{\|u_{H,l}-u_{H,f}\|}{\|u_{H,f}\|} \times 100\%$,\quad
      $e_2 = \cfrac{\|u_{H,c}-u_{H,f}\|}{\|u_{H,f}\|} \times 100\%$.
\item $e_3 = \cfrac{\|u_{H,l,1}-u_{H,c,1}\|}{\|u_{H,c,1}\|} \times 100\%$,\quad
      $e_4 = \cfrac{\|u_{H,l}-u_{H,c}\|}{\|u_{H,c}\|} \times 100\%$.
\end{itemize}
$e_1$ and $e_2$ are used to measure the difference of accuracy between our learning scheme and the scheme in which we compute both $u_{H,1}$ and $u_{H,2}$. $e_3$ and $e_4$ are defined to measure how close the learning output is to their target.
The neural network is fully connected and is activated by the scaled exponential linear unit (SELU). To obtain the solution of the minimization problem, we use the gradient-based optimizer Adam \cite{kingma2017adam}.
The loss function we use is the mean square error (MSE). Our machine learning algorithm is implemented using the open-source library Keras \cite{chollet2015keras} with the Tensorflow \cite{tensorflow2015-whitepaper} backend.
The neural network we use is a simple neural network with $5$ fully connected layers. We remark that  with this  neural network, we can obtain high accuracy.
%\cite{kingma2017adam},\cite{chollet2015keras},\cite{tensorflow2015-whitepaper}

In the first two examples, we consider the following linear parabolic 
partial differential equations
\begin{align*}
    u_t - \nabla \cdot (\kappa \nabla u ) &= g_{w}(x,t), \;(x,t) \in \Omega\times [0,T] , \\
    \nabla u(x) \cdot \vec{n} &= 0, \; x \in \partial \Omega, \\
    u(x,0) &= u_0,
\end{align*}
where $\vec{n}$ is the outward normal vector and $u_0$ is the initial condition which is presented it in Figure \ref{figIC&ST1}. 
In our first example, the source term is defined as follows,
\begin{align*}
    g_{w}(x,t) = 
    \begin{cases}
    100(w_1 \sin( \cfrac{2\pi t}{T}) + w_2 \sin(\cfrac{5.2\pi t}{T}))  & \text { for } x \in[0.2,0.3]^2 \\ 
    100(w_3 \sin( \cfrac{2.4\pi t}{T}) + w_4 \sin(\cfrac{4\pi t}{T}))  & \text { for } x \in[0.8,0.9]^2 \\ 
    0 & \text { otherwise, }
    \end{cases}
\end{align*}
where $(w_1, w_2, w_3, w_4)$ are parameters. In this case, we let $1\leq w_i \leq 10$ for $i=1,2,3,4$. We show a simple sketch of $g_{w}(x,t)$ in Figure \ref{figIC&ST1} to illustrate. This sketch only shows the position of the coarse blocks where the source term is nonzero. We choose this source, which has some similarities to the wells in petroleum engineering. The final time is $T = 0.01$, there are $N=100$ time steps and the time step size is $\Delta t = T/N = 10^{-4}$.

\begin{figure}[H]
\centering
\includegraphics[width = 6cm]{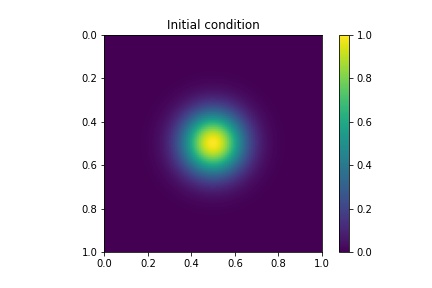}
\includegraphics[width = 6cm]{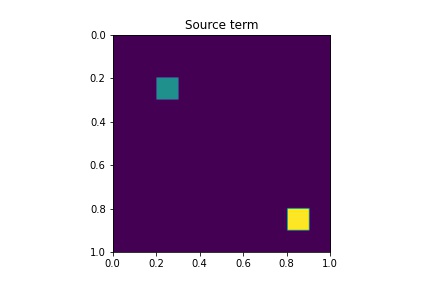}
\caption{Left: Initial condition $u_0$. Right: Source term $g(x,t)$. The square boxes indicate the positions of the wells.}
\label{figIC&ST1}
\end{figure}

The training space contains $1000$ samples and the testing space contains $500$ samples. Every sample's parameter $w$ is drawn from the uniform distribution over $[1,10]$. The number of POD basis vectors is $l=15$. We choose this $l$ based on the eigenvalue behavior in the process of singular value decomposition. We present $e_1$ and $e_2$ defined before in Figure \ref{error1}. The error is calculated at each time step as the average over all samples in the testing space. From the plot in Figure \ref{error1}, we find that the curves for $e_1$ and $e_2$ coincide, which means our learning algorithms can obtain similar accuracy as the scheme in which we compute $u_H$ by Equation (\ref{pe1}) and (\ref{pe2}).
The average of $e_3$ and $e_4$ over time are $0.151\%$ and $0.155\%$, respectively, which means the output of our learning scheme nearly resembles the corresponding target (the solution we obtain by computing both equations).  
\begin{figure}[H]
\centering
\includegraphics[width = 6cm]{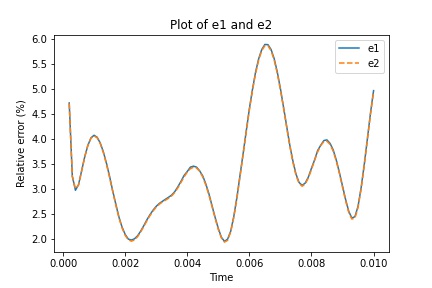}
\caption{$e_1$ and $e_2$ of the linear case with $4$ parameters.}
\label{error1}
\end{figure}

In the next case, we test our algorithms using a more complicated source term with more parameters. The source term we use is,
\begin{align*}
    g_{w}(x,t) = 
    \begin{cases}
    100(w_1 \sin( \cfrac{\pi t}{T}) + w_2 \cos(\cfrac{3.2\pi t}{T}) )  & \text { for } x \in[0.2,0.3]^2 \\ 
    100(w_3 \sin( \cfrac{2.2\pi t}{T}) + w_4 \sin(\cfrac{1.6\pi t}{T}) ) & 
    \text { for } x \in[0.8,0.9]^2 \\ 
    100(w_5 \cos( \cfrac{3\pi t}{T}) + w_6 \cos(\cfrac{4.6\pi t}{T}) ) & \text { for } x \in[0.2,0.3] \times [0.8,0.9] \\ 
    100(w_7 \cos( \cfrac{1.4\pi t}{T}) + w_8 \sin(\cfrac{5\pi t}{T}) ) & \text { for } x \in[0.8,0.9] \times [0.2,0.3] \\ 
    100(w_9 \sin( \cfrac{2.8\pi t}{T}) + w_{10} \sin(\cfrac{4\pi t}{T}) ) & \text { for } x \in[0.5,0.6]^2 \\ 
    0 & \text { otherwise, }
    \end{cases}
\end{align*}
where $(w_1, w_2, w_3,\cdots,w_{10})$ are parameters. In this case, the parameters $w_i$ of every sample are drawn from the uniform distribution over $[1,20]$. We present an easy sketch of $g_{w}(x,t)$ in Figure \ref{figIC&ST2} to illustrate the positions of the sources. The initial condition $u_0$ is also shown in Figure \ref{figIC&ST2}. The final time $T$, the number of time steps $N$ and the time step size are $0.01$, $100$ and $10^{-4}$, respectively. 

\begin{figure}[H]
\centering
\includegraphics[width = 6cm]{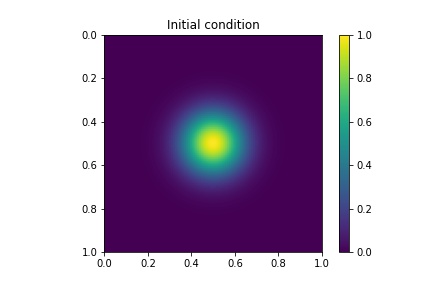}
\includegraphics[width = 6cm]{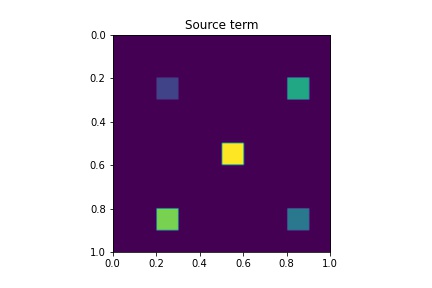}
\caption{Left: Initial condition $u_0$. Right: Source term $g_{w}(x,t)$. The square boxes in the plot indicate the positions of the wells.}
\label{figIC&ST2}
\end{figure}

The model is trained using 2000 samples and we test it with 1000 samples.
The number of POD basis vectors is $l = 25$. In Figure \ref{error2}, the $e_1$ and $e_2$ are presented. 
By observing the error plot, we find that $e_1$ and $e_2$ are close to each other, which implies that our learning algorithms can obtain similar accuracy as the scheme where we calculate both $u_{H,1}$ and $u_{H,2}$ using Equation (\ref{pe1}) and (\ref{pe2}). The average of $e_3$ and $e_4$ over time are $0.673\%$ and $0.677\%$, respectively. Such small $e_3$ and $e_4$ indicate that the prediction is accurate.
\begin{figure}[H]
\centering
\includegraphics[width = 6cm]{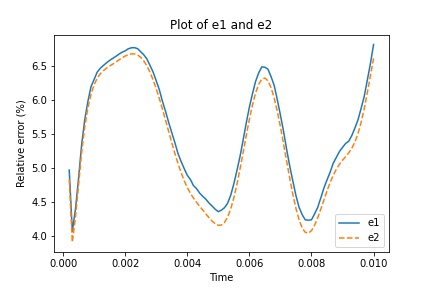}
\caption{$e_1$ and $e_2$ of the linear case with $10$ parameters.}
\label{error2}
\end{figure}

In the next case, we consider the following equation
\begin{align*}
u_t - \nabla \cdot ( \kappa e^u \nabla u ) &= g(x,t), \; (x,t) \in \Omega \times [0,T],\\
\nabla u(x) \cdot \vec{n} = 0 , \; x \in \partial \Omega ,\\
u(x,0) = u_0,
\end{align*}
where $\vec{n}$ is the outward normal vector. This equation is nonlinear and its computation cost is much larger than the linear case because we have to assemble the stiffness matrix at every time step. We use the same initial condition $u_0$ and source term $g(x,t)$ as the second example. The final time is $T = 0.001$, the number of time steps is $N = 40$ and the time step size is $\Delta t = T/N =  2.5\times 10^{-5}$. We train our neural network using $1000$ samples and the network is tested using $500$ samples. In Figure \ref{error3}, we present $e_1$ and $e_2$ defined before. From the first plot, we find that the curves for $e_1$ and $e_2$ nearly coincide, which implies that our machine learning algorithms can have the same accuracy as we compute the solution by  Equations (\ref{pe1}) and (\ref{pe2}). The average of $e_3$ and $e_4$ over time are $0.460\%$ and $0.450\%$, respectively, which means our neural network can approximate the target well and achieve a fantastic accuracy.

\begin{figure}[H]
\centering
\includegraphics[width = 6cm]{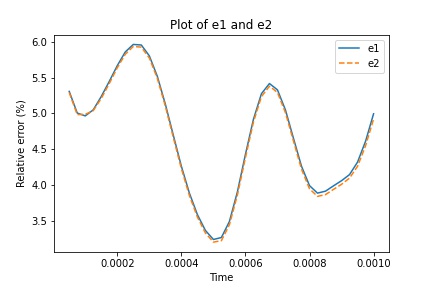}
\caption{$e_1$ and $e_2$ of the nonlinear case with 10 parameters.}
\label{error3}
\end{figure}

\section{Conclusion}

In this paper, we construct neural networks based on the partially explicit splitting scheme. We predict $u_1$, the implicit part of the solution, while solving for $u_2$, the explicit part of the solution. The output dimension resulting
from $u_1$ is  large and we apply the POD  to reduce the dimension. The POD transformation greatly reduces the output dimension and at the same time the number of parameters in the neural networks. The POD error is small compared to the overall error. We discuss three numerical examples. We present a theoretical justification. We conclude that our machine learning algorithms can obtain similar accuracy as the partially explicit scheme.

\appendix
\section{Proof of Lemma \ref{thelemma}}
\label{appendixA}
In this section, we prove the Lemma \ref{thelemma}, i.e., the function 
\begin{align*}
h : W &\rightarrow  \mathbb{R}^{l \times (N-1)},\\
 w &\mapsto ( u_{H,w,1}^{2,\prime,1}, u_{H,w,1}^{2,\prime,2}, \dots, u_{H,w,1}^{2,\prime,l}, u_{H,w,1}^{3,\prime,1}, u_{H,w,1}^{3,\prime,2}, \dots, u_{H,w,1}^{3,\prime,l}, \dots, u_{H,w,1}^{N,\prime,1}, u_{H,w,1}^{N,\prime,2}, \dots, u_{H,w,1}^{N,\prime,l} )
 \end{align*}
is continuous.
We equip $\mathbb{R}^{l \times (N-1)}$ with the $l_{\infty}$ norm. 
Let us decompose the mapping as $h = \mathcal{P} \circ h_2 \circ h_1$, where
\begin{align*}
&h_1 : (W, \|\cdot\|_{\infty}) \to (L^2(\Omega \times [0,T],\mathbb{R} ), \|\cdot\|_{L^2}) \quad w \mapsto g_w(x,t), \\
&h_2 : (L^2(\Omega \times [0,T],\mathbb{R} ), \|\cdot\|_{L^2})  \to (H^1(\Omega), \|\cdot\|_{H^1} ), \quad g_w(x,t) \mapsto \{u_H(x,t^n)\}_{n=1}^N,
\end{align*}
and $\mathcal{P}$ is the POD transformation defined by (\ref{PODdef}).
The map $\mathcal{P}$ is a projection, thus continuous. We assume that $g_{w}$ is continuous with respect to $w$ and therefore, $h_1$ is continuous. To show that $h$ is continuous, we only need to prove $h_2$ is continuous. The proof is given in the next lemma.

\begin{lemma}
Let us assume $f$ is linear, and we define a bilinear form $a(u,v) := (f(u),v)$ and its co-responding norm as $\|u\|_a := \sqrt{a(u,u) }$.
If 
\begin{align}
\label{conditionDeltatAppendix}
\Delta t \leq (1-\gamma) \left( \sup_{v_2\in V_{H,2}} \cfrac{\|v_2\|_a^2} { \| v_2 \|^2 }  \right)^{-1},
\end{align}
the function $h_2$ is continuous with respect to the source term $g_w(x,t)$.
\end{lemma}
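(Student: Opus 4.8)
The plan is to turn the continuity claim into a quantitative stability estimate and then exploit the linearity of $f$. Since $f$ is linear, $h_2$ depends affinely on the source data. Concretely, if $\{u_H^n\}$ and $\{\tilde u_H^n\}$ are the partially explicit solutions produced by two sources $g$ and $\tilde g$ from the same initial data $u^0,u^1$, then their difference $e^n := u_H^n - \tilde u_H^n$, split as $e^n = e_1^n + e_2^n$ with $e_i^n \in V_{H,i}$, satisfies exactly the scheme (\ref{pe1})--(\ref{pe2}) with right-hand side $(g-\tilde g)(\cdot,t^n)$ and vanishing initial data $e^0=e^1=0$. It therefore suffices to produce a constant $C = C(\Delta t, N, V_{H,1}, V_{H,2})$ with $\max_{1\le n\le N}\|e^n\|_{H^1} \le C\,\max_n\|(g-\tilde g)(\cdot,t^n)\|$; this makes $h_2$ Lipschitz in the source (through its temporal samples, which are all the scheme sees), hence continuous.

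First I would record that each time step is well posed. Reading (\ref{pe1}) as an equation for $e_1^{n+1}$ with the remaining quantities known, the governing bilinear form on $V_{H,1}\times V_{H,1}$ is $\tfrac{1}{\Delta t}(\cdot,\cdot) + a(\cdot,\cdot)$, which is symmetric positive definite since the mass term is positive and $a(v,v) = (f(v),v) > 0$ by (\ref{assumptionf}); likewise (\ref{pe2}) determines $e_2^{n+1}$ through the positive-definite mass form $\tfrac{1}{\Delta t}(\cdot,\cdot)$ on $V_{H,2}\times V_{H,2}$. Hence the one-step update is a well-defined bounded affine map and $h_2$ is a finite composition of such maps.

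The core of the argument is the a priori energy estimate, and this is exactly where the hypothesis (\ref{conditionDeltatAppendix}) enters. I would test (\ref{pe1}) and (\ref{pe2}) with the discrete time increments of the respective components (schematically $v_1 \sim e_1^{n+1}-e_1^{n}$ and $v_2 \sim e_2^{n+1}-e_2^{n}$), add the two identities, and telescope in $n$. The implicit evaluation of the $V_{H,1}$ part inside $f$ produces a sign-definite, dissipative contribution measured by $\|e_1^n\|_a^2$, whereas the explicit evaluation of $u_2^n$ in the term $a(e_1^{n+1}+e_2^n,\cdot)$ yields the single dangerous contribution. To control it I would combine the Rayleigh-quotient bound $\|e_2^n\|_a^2 \le \big(\sup_{v_2\in V_{H,2}}\|v_2\|_a^2/\|v_2\|^2\big)\,\|e_2^n\|^2$ with the strengthened Cauchy--Schwarz inequality (\ref{s_CS}), which yields the equivalence $\|v_1+v_2\|^2 \ge (1-\gamma)(\|v_1\|^2+\|v_2\|^2)$ and thus decouples the $V_{H,1}$ and $V_{H,2}$ mass contributions. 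Precisely when $\Delta t$ obeys (\ref{conditionDeltatAppendix}) one has $\Delta t\,\|e_2^n\|_a^2 \le (1-\gamma)\,\|e_2^n\|^2$, so the explicit term is absorbed by the mass term instead of being amplified. A discrete Gr\"onwall/summation argument then bounds $\max_n\big(\|e_1^n\|_a^2 + \|e_2^n\|^2\big)$ by $\sum_n \Delta t\,\|(g-\tilde g)(\cdot,t^n)\|^2$, and on the finite-dimensional space $V_H$ the $a$-norm controls the $H^1$-norm.

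I expect the main obstacle to be the energy estimate itself: choosing test functions that make the mixed time levels of (\ref{pe1})--(\ref{pe2}) telescope cleanly, and then carrying out the summation-by-parts so that the explicit $V_{H,2}$ contribution is absorbed exactly at the threshold (\ref{conditionDeltatAppendix}) while the $V_{H,1}$--$V_{H,2}$ coupling is routed through $\gamma$. Once such an a priori bound is in hand, the continuity conclusion is immediate. I would also note that, strictly speaking, continuity over finitely many steps in finite dimensions already follows from the invertibility recorded above; the stability estimate is nevertheless the natural route that the condition (\ref{conditionDeltatAppendix}) is designed for, and it has the advantage of producing a Lipschitz constant that is uniform in the contrast.
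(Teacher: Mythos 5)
Your core argument is the same as the paper's: reduce continuity to a quantitative stability bound for the difference of two discrete solutions, test the difference scheme with the time increments $r_i^{n+1}-r_i^{n}$ of the respective components, control the mixed $V_{H,1}$--$V_{H,2}$ terms with the strengthened Cauchy--Schwarz inequality (\ref{s_CS}), absorb the single non-dissipative term coming from the explicit treatment of the $V_{H,2}$ component using the threshold (\ref{conditionDeltatAppendix}) together with the Rayleigh-quotient bound, and then telescope in $n$. Your side remark that plain continuity already follows from finite-dimensionality and invertibility of the one-step affine maps, with no CFL restriction at all, is also correct and worth stating explicitly: the condition on $\Delta t$ buys the uniform stability constant, not continuity per se.

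The one genuine flaw is your treatment of the first step. You declare that the two solutions are produced ``from the same initial data $u^0,u^1$,'' so that the difference has $e^0=e^1=0$. In the paper's setting this is false: only $u^0$ is prescribed, while $u^1$ is computed from the source by the fully implicit backward Euler scheme (\ref{refsol1}) on $V_H=V_{H,1}\oplus V_{H,2}$, so $u^1$ itself depends on $w$ and $e^1\neq 0$ in general. Your telescoping recursion therefore terminates at $n=1$ with a quantity your argument never bounds, and the estimate does not close as written. The paper handles this with a separate base-case estimate (its Case I): test the implicit first-step equation for the difference with $e^1$, apply Young's inequality, and then use the strengthened Cauchy--Schwarz inequality to split the resulting bound into the two components; this is inequality (\ref{stability1}), which is exactly the initial datum your recursion needs. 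The repair is easy and fits inside your framework --- it is the same energy argument applied to one backward Euler step --- but without it the map you prove continuous is not $h_2$ but a different map in which the first two time levels are frozen independently of the source.
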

\begin{proof} 
\textbf{Case I.} The solution $u_i(x,\Delta t)$ for $i = 1, 2$ is continuous with respect to the source term $g_w(x,t)$.\\
The initial condition $u(x,0)$ is given and the solution at time step $1\cdot \Delta t$ is calculated via the fully implicit scheme. Thus, we want to show the continuity from the source term to the solution $u_i^1$ of the following scheme
\begin{align*}
    (\frac{u^{1}_{H}- u^0_H}{\Delta t } , v) + ( f(u^{1}_H) , v) = 
    (g_{w}(x,t^{1}) , v) \quad \forall v \in V_H,
\end{align*}
where $V_H = V_{H,1} \oplus V_{H,2}$.

In the following proof, we abbreviate the subscript $H$ for simplicity.

Let $w_1$ and $w_2$ be different, and let us denote the corresponding sources as $g_{w_1}$ and $g_{w_2}$. The discretized solutions $u^{1}_{w_1}$ and $u^{1}_{w_2}$ evolved for one time step then satisfy:
\begin{align*}
    (\frac{u^{1}_{w_1}- u^0}{\Delta t } , v) + ( f(u^{1}_{w_1}) , v) = 
    (g_{w_1}(x,t^{1}) , v) \quad \forall v \in V_H,\\
  (\frac{u^{1}_{w_2}- u^0}{\Delta t } , v) + ( f(u^{1}_{w_2}) , v) = 
    (g_{w_2}(x,t^{1}) , v) \quad \forall v \in V_H.
\end{align*}
It follows immediately that, 
\begin{align*}
(\frac{u^{1}_{w_1} - u^{1}_{w_2} }{\Delta t} ,v ) + (f(u^{1}_{w_1}) -  f(u^{1}_{w_2}), v) = (g_{w_1}(x,t^{1}) - g_{w_2}(x,t^{1}), v) \quad \forall v \in V_H.
\end{align*}
Test with $v = u^{1}_{w_1} - u^{1}_{w_2}$, the linearity of $f$ then yields,
\begin{align*}
\begin{split}
\frac{1}{\Delta t} \|u^{1}_{w_1} - u^{1}_{w_2}\|^2 + (f(u^{1}_{w_1} - u^{1}_{w_2}), u^{1}_{w_1} - u^{1}_{w_2} ) &\leq \| g_{w_1}(x,t^{1}) - g_{w_2}(x,t^{1}) \| \|u^{1}_{w_1} - u^{1}_{w_2}\| \\
& \leq \frac{\Delta t}{2} \| g_{w_1}(x,t^{1}) - g_{w_2}(x,t^{1}) \|^2 + 
		\frac{1}{2\Delta t} \|u^{1}_{w_1} - u^{1}_{w_2}\|^2
\end{split}
\end{align*}

After rearrangement, we have 
\begin{align*}
\begin{split}
\frac{1}{2 \Delta t} \|u^{1}_{w_1} - u^{1}_{w_2}\|^2 + \|u^{1}_{w_1} - u^{1}_{w_2}\|_a^2 
 \leq \frac{\Delta t}{2} \| g_{w_1}(x,t^{1}) - g_{w_2}(x,t^{1}) \|^2.
\end{split}
\end{align*}

Apply the strengthened Cauchy Schwartz inequality (\ref{s_CS}), let $u_{w_m}^{n} = u_{1,w_m}^{n} + u_{2, w_m}^{n}$ with $u_{1,w_m}^{n} \in V_{H,1}$ and $u_{2,w_m}^{n} \in V_{H,2}$, we have,
\begin{align*}
\|  u_{w_1}^{n} - u_{w_2}^{n}  \|^2 &= \| u_{1,w_1}^{n} - u_{1, w_2}^{n}  \|^2 + \| u_{2,w_1}^{n} - u_{2,w_2}^{n}  \|^2 + 2 ( u_{1,w_1}^{n} - u_{1, w_2}^{n} ,  u_{2,w_1}^{n} - u_{2,w_2}^{n}  ) \\
&\geq \| u_{1,w_1}^{n} - u_{1, w_2}^{n}  \|^2 + \| u_{2,w_1}^{n} - u_{2,w_2}^{n}  \|^2 - 2\gamma ( u_{1,w_1}^{n} - u_{1, w_2}^{n} ,  u_{2,w_1}^{n} - u_{2,w_2}^{n}  ) \\
&\geq (1- \gamma)  (\| u_{1,w_1}^{n} - u_{1, w_2}^{n}  \|^2 + \| u_{2,w_1}^{n} - u_{2,w_2}^{n}  \|^2 ),
\end{align*}
for all $n>0$.

Thus,
\begin{align}
\frac{1-\gamma}{2 \Delta t} \sum_{i} \|u^{1}_{i,w_1} - u^{1}_{i,w_2}\|^2 + \|u^{1}_{w_1} - u^{1}_{w_2}\|_a^2 
 \leq \frac{\Delta t}{2} \| g_{w_1}(x,t^{1}) - g_{w_2}(x,t^{1}) \|^2.
 \label{stability1}
\end{align}

\textbf{Case II.}
The solution $u_1(x, n\Delta t)$ is continuous with respect to the source term $g_w(x,t)$, for $n = 2, 3, \dots, N$.

In the partially explicit scheme, for the source term $g_{w_1}$, the solution $u_{i,w_1}^{n+1}$ satisfies
\begin{align}
&(\frac{u_{1,w_1}^{n+1}-u_{ 1,w_1}^{n}}{\Delta t}+\frac{u_{ 2,w_1}^{n}-u_{ 2,w_1}^{n-1}}{\Delta t}, v_{1})+(f(u_{ 1,w_1}^{n+1}+u_{ 2,w_1}^{n}),v_1) = (g_{w_1}(x,t^n), v_{1}) \quad\forall v_{1} \in V_{H, 1}, \label{pfpe1} \\
&(\frac{u_{2,w_1}^{n+1}-u_{2,w_1}^{n}}{\Delta t} + \frac{u_{ 1,w_1}^{n}-u_{ 1,w_1}^{n-1}}{\Delta t}, v_{2})+(f(u_{1,w_1}^{n+1}+u_{2, w_1}^{n}),v_2) = (g_{w_1}(x,t^n), v_{2}) \quad\forall v_{2} \in V_{H, 2} .\label{pfpe2} 
\end{align}

Similarly, for $g_{w_2}$, the solution $u_{i,w_2}^{n+1}$ satisfies 
\begin{align}
&(\frac{u_{1,w_2}^{n+1}-u_{ 1,w_2}^{n}}{\Delta t}+\frac{u_{ 2,w_2}^{n}-u_{ 2,w_2}^{n-1}}{\Delta t}, v_{1})+(f(u_{ 1,w_2}^{n+1}+u_{ 2,w_2}^{n}),v_1) = (g_{w_2}(x,t^n), v_{1}) \quad\forall v_{1} \in V_{H, 1}, \label{pfpe3} \\
&(\frac{u_{2,w_2}^{n+1}-u_{2,w_2}^{n}}{\Delta t} + \frac{u_{ 1,w_2}^{n}-u_{ 1,w_2}^{n-1}}{\Delta t}, v_{2})+(f(u_{1,w_2}^{n+1}+u_{2, w_2}^{n}),v_2) = (g_{w_2}(x,t^n), v_{2}) \quad\forall v_{2} \in V_{H, 2} .\label{pfpe4} 
\end{align}
Let us denote $r^{n}_i = u_{i,w_1}^{n} - u_{i,w_2}^{n} $ for $i = 1, 2$, and subtract (\ref{pfpe3}) from (\ref{pfpe1}) and (\ref{pfpe4}) from (\ref{pfpe2}), it follows that,
\begin{align*}
(\frac{r_1^{n+1}  - r_1^n }{\Delta t}
+\frac{r_2^n - r_{2}^{n-1} }{\Delta t}, v_{1})
+(f(u_{ 1,w_1}^{n+1}+u_{ 2,w_1}^{n}) - f(u_{ 1,w_2}^{n+1}+u_{ 2,w_2}^{n}) ,v_1) \\
= (g_{w_1}(x,t^n) - g_{w_2}(x,t^n), v_{1}) \quad\forall v_{1} \in V_{H, 1},  \\
(\frac{r_{2}^{n+1} - r_{2}^{n} }{\Delta t}
+\frac{r_{1}^{n} - r_{1}^{n-1} }{\Delta t}, v_{2})
+(f(u_{ 1,w_1}^{n+1}+u_{ 2,w_1}^{n}) - f(u_{ 1,w_2}^{n+1}+u_{ 2,w_2}^{n}) ,v_2) \\
= (g_{w_1}(x,t^n) - g_{w_2}(x,t^n), v_{2}) \quad\forall v_{2} \in V_{H, 2}.
\end{align*}

The linearity of $f$ then yields,
\begin{align*}
(\frac{r_{1}^{n+1} - r_{1}^{n} }{\Delta t}
+\frac{r_2^n - r_{2}^{n-1} }{\Delta t}, v_{1})
+(f(r_{1}^{n+1} + r_{2}^{n} ) ,v_1) 
= (g_{w_1}(x,t^n) - g_{w_2}(x,t^n), v_{1}) \quad\forall v_{1} \in V_{H, 1},  \\
(\frac{r_{2}^{n+1} - r_{2}^{n} }{\Delta t}
+\frac{r_{1}^{n} - r_{1}^{n-1} }{\Delta t}, v_{2})
+(f(r_{1}^{n+1} + r_{2}^{n} ) ,v_2) 
= (g_{w_1}(x,t^n) - g_{w_2}(x,t^n), v_{2}) \quad\forall v_{2} \in V_{H, 2}.
\end{align*}

Let $v_1 = r_{1}^{n+1} - r_1^n$ and 
$v_2 = r_{2}^{n+1} - r_2^n$. We get
\begin{align*}
(\frac{r_{1}^{n+1} - r_{1}^{n} }{\Delta t}
+\frac{r_2^n - r_{2}^{n-1} }{\Delta t}, 
r_{1}^{n+1} - r_{1}^{n} )
+(f(r_{1}^{n+1} + r_{2}^{n} ) ,
r_{1}^{n+1} - r_{1}^{n} ) 
= (g_{w_1}(x,t^n) - g_{w_2}(x,t^n), r_{1}^{n+1} - r_{1}^{n} ), \\ 
(\frac{r_{2}^{n+1} - r_{2}^{n} }{\Delta t}
+\frac{r_{1}^{n} - r_{1}^{n-1} }{\Delta t}, 
 r_{2}^{n+1} - r_2^n )
+(f(r_{1}^{n+1} + r_{2}^{n} ) ,
 r_{2}^{n+1} - r_2^n) 
= (g_{w_1}(x,t^n) - g_{w_2}(x,t^n),  r_{2}^{n+1} - r_2^n ).
\end{align*}

Sum up these two equations, we have
\begin{align}
\frac{1}{\Delta t} \sum_i \| r_{i}^{n+1} -  r_{i}^{n} \|^2 
 + \frac{1}{\Delta t} \sum_{i \neq j}  ( r_i^{n+1} - r_i^n, r_j^{n} - r_j^{n-1} )  
 + (f(r_{1}^{n+1} + r_{2}^{n} ) ,
 r^{n+1} - r^n ) \nonumber \\ 
= (g_{w_1}(x,t^n) - g_{w_2}(x,t^n),  r^{n+1} - r^n ),
\label{eqn1}
\end{align}
where $r^n = r^n_1 + r^n_2$.
The second term of (\ref{eqn1}) on the left hand side can be further estimated as,
\begin{align*}
\frac{1}{\Delta t} \sum_{i \neq j}  |( r_{i}^{n+1} - r_{i}^n ,
	r_{j}^{n} - r_{j}^{n-1}  )  | 
&\leq \frac{\gamma}{\Delta t} \sum_{i \neq j} 
\| r_{i}^{n+1} - r_{i}^n \| 
\| r_{j}^{n} - r_{j}^{n-1} \| \\
&\leq \frac{\gamma}{2 \Delta t} \sum_{i \neq j} 
( \| r_{i}^{n+1} - r_{i}^n \|^2  
+ \| r_{j}^{n} - r_{j}^{n-1} \|^2 ),
\end{align*}
where we apply the strenthened Cauchy Schwartz inequality. It follows that,
\begin{align*}
&\frac{1}{\Delta t} \sum_i \| r_{i}^{n+1} -  r_{i}^{n} \|^2 
 + \frac{1}{\Delta t} \sum_{i \neq j}  ( r_{i}^{n+1} - r_{i}^n ,
	r_{j}^{n} - r_{j}^{n-1}  )   \\
\geq& 
\frac{2-\gamma}{2\Delta t} \sum_i \| r_{i}^{n+1} -  r_{i}^{n} \|^2 
- \frac{\gamma} { 2\Delta t} \sum_i \| r_{i}^{n} - r_{i}^{n-1} \|^2 \\
=& (\frac{\gamma}{2\Delta t} + \frac{1-\gamma}{\Delta t}) 
\sum_i \| r_{i}^{n+1} -  r_{i}^{n} \|^2 
- \frac{\gamma} { 2\Delta t} \sum_i \| r_{i}^{n} - r_{i}^{n-1} \|^2.
\end{align*}
Note that $( f(r_{1}^{n+1} + r_{2}^{n} ) , r^{n+1} - r^n ) =
 a( r_{1}^{n+1} + r_{2}^{n}  , r^{n+1} - r^n )$, we then can estimate the third term of (\ref{eqn1}),
\begin{align*}
&(  f(r_{1}^{n+1} + r_{2}^{n} ) , r^{n+1} - r^n ) \\ 
= &a( r^{n+1}   ,
 r^{n+1} - r^n )
 - a ( r_2^{n+1} - r_{2}^{n}  ,
 r^{n+1} - r^n )  \\
=& \frac{1}{2} ( \| r^{n+1} \|_a^2 
+ \| r^{n+1} - r^n\|^2_a  
- \| r^n \|_a^2 ) 
 - a ( r_2^{n+1} - r_{2}^{n}  ,
 r^{n+1} - r^n )  \\
\geq& \frac{1}{2} ( \| r^{n+1} \|_a^2 
+ \| r^{n+1} - r^n\|^2_a  
- \| r^n \|_a^2 ) 
 - \frac{1}{2} \| r_2^{n+1} - r_{2}^{n} \|_a^2 
 -\frac{1}{2}  \| r^{n+1} - r^n \|_a^2   \\
 =& \frac{1}{2} ( \| r^{n+1} \|_a^2  
- \| r^n \|_a^2 ) 
 - \frac{1}{2} \| r_2^{n+1} - r_{2}^{n} \|_a^2. 
\end{align*}
Now (\ref{eqn1}) can be estimated as:
\begin{align*}
(\frac{\gamma}{2\Delta t} + \frac{1-\gamma}{\Delta t})
 \sum_i \| r_{i}^{n+1} -  r_{i}^{n} \|^2 
- \frac{\gamma} { 2\Delta t} \sum_i \| r_{i}^{n} - r_{i}^{n-1} \|^2 
+ \frac{1}{2} ( \| r^{n+1} \|_a^2  
- \| r^n \|_a^2 ) 
 - \frac{1}{2} \| r_2^{n+1} - r_{2}^{n} \|_a^2 \\
\leq 
(g_{w_1}(x,t^n) - g_{w_2}(x,t^n),  r^{n+1} - r^n ).
\end{align*}

After rearranging, we have
\begin{align*}
&\frac{\gamma}{2\Delta t}
 \sum_i \| r_{i}^{n+1} -  r_{i}^{n} \|^2 
+ \frac{1}{2}  \| r^{n+1} \|_a^2  +
 \frac{1-\gamma}{\Delta t}
 \sum_i \| r_{i}^{n+1} -  r_{i}^{n} \|^2
 - \frac{1}{2} \| r_2^{n+1} - r_{2}^{n} \|_a^2 \\
\leq &
\frac{\gamma} { 2\Delta t} \sum_i \| r_{i}^{n} - r_{i}^{n-1} \|^2
+ \frac{1}{2}  \| r^n \|_a^2  
+ (g_{w_1}(x,t^n) - g_{w_2}(x,t^n),  r^{n+1} - r^n ) \\
\leq &
\frac{\gamma} { 2\Delta t} \sum_i \| r_{i}^{n} - r_{i}^{n-1} \|^2
+ \frac{1}{2}  \| r^n \|_a^2  
+ \gamma \| g_{w_1}(x,t^n) - g_{w_2}(x,t^n) \| \|  r^{n+1} - r^n \|  \\
\leq &
\frac{\gamma} { 2\Delta t} \sum_i \| r_{i}^{n} - r_{i}^{n-1} \|^2
+ \frac{1}{2}  \| r^n \|_a^2  
+ \frac{\gamma^2  \Delta t } {1 - \gamma} \| g_{w_1}(x,t^n) - g_{w_2}(x,t^n) \|^2 
+ \frac{1-\gamma}{4 \Delta t} \|  r^{n+1} - r^n \|^2 \\
\leq &
\frac{\gamma} { 2\Delta t} \sum_i \| r_{i}^{n} - r_{i}^{n-1} \|^2
+ \frac{1}{2}  \| r^n \|_a^2  
+ \frac{\gamma^2  \Delta t } {1 - \gamma} \| g_{w_1}(x,t^n) - g_{w_2}(x,t^n) \|^2 
+  \frac{1-\gamma}{2 \Delta t} \sum_i \|  r_{i}^{n+1} - r_i^n \|^2.
\end{align*}
We assume the CFL condition (\ref{conditionDeltatAppendix}), specifically,
\begin{align*}
 (\frac{1-\gamma}{\Delta t} - \frac{1-\gamma}{2 \Delta t} ) 
 \sum_i \| r_{i}^{n+1} -  r_{i}^{n} \|^2
 - \frac{1}{2} \| r_2^{n+1} - r_{2}^{n} \|_a^2 \geq 0.
\end{align*}
This yields,
\begin{align}
\begin{split}
&\frac{\gamma}{2\Delta t}
 \sum_i \| r_{i}^{n+1} -  r_{i}^{n} \|^2 
+ \frac{1}{2}  \| r^{n+1} \|_a^2  \\
\leq &
\frac{\gamma} { 2\Delta t} \sum_i \| r_{i}^{n} - r_{i}^{n-1} \|^2
+ \frac{1}{2}  \| r^n \|_a^2  
+ \frac{\gamma^2  \Delta t } {1 - \gamma}  \| g_{w_1}(x,t^n) - g_{w_2}(x,t^n) \|^2  \\
\leq &
\frac{\gamma} { 2\Delta t} \sum_i \| u_{ i,w_1}^{1}  - u_{ i,w_2}^{1} - (u_{ i,w_1}^{0} - u_{ i,w_2}^{0}) \|^2
+ \frac{1}{2}  \| u_{w_1}^{1} - u_{w_2}^{1} \|_a^2  
+ \frac{\gamma^2  \Delta t } {1 - \gamma} \sum_{k=1}^n \| g_{w_1}(x,t^k) - g_{w_2}(x,t^k) \|^2\\
= &
\frac{\gamma} { 2\Delta t} \sum_i \| u_{ i,w_1}^{1}  - u_{ i,w_2}^{1}\|^2 
+ \frac{1}{2}  \| u_{w_1}^{1} - u_{w_2}^{1} \|_a^2  
+ \frac{\gamma^2  \Delta t } {1 - \gamma} \sum_{k=1}^n \| g_{w_1}(x,t^k) - g_{w_2}(x,t^k) \|^2 \\
\leq & \frac{\gamma \Delta t}{2(1-\gamma)}  \| g_{w_1}(x,t^1) - g_{w_2}(x, t^1) \|^2
+  \frac{\gamma^2  \Delta t } {1 - \gamma} \sum_{k=1}^n \| g_{w_1}(x,t^k) - g_{w_2}(x,t^k) \|^2,
\label{ineq-1}
\end{split}
\end{align}
where we use (\ref{stability1}) in the last inequality.
Then, 
\begin{align*}
& \frac{\gamma}{2\Delta t} \sum_i \| r_i^{n+1} \|^2 
\leq  \frac{\gamma}{ \Delta t} \sum_i (\| r_i^{n+1} - r_i^n \|^2 +  \| r_i^n \|^2)  \\
\leq & \frac{\gamma \Delta t }{1-\gamma} \| g_{w_1}(x,t^1) - g_{w_2}(x, t^1) \|^2
+ \frac{2 \gamma^2  \Delta t } {1 - \gamma} \sum_{k=1}^n \| g_{w_1}(x,t^k) - g_{w_2}(x,t^k) \|^2 + \frac{\gamma}{\Delta t} \sum_i \| r_i^{n} \|^2 \\
\leq & \frac{n \gamma \Delta t}{1-\gamma}  \| g_{w_1}(x,t^1) - g_{w_2}(x, t^1) \|^2 
+  \frac{2 \gamma^2  \Delta t } {1 - \gamma} \sum_{ind=1}^{n} \sum_{k=1}^{ind} \| g_{w_1}(x,t^k) - g_{w_2}(x,t^k) \|^2 
+ \frac{\gamma}{\Delta t} \sum_i \| r_i^{1} \|^2  \\
\leq & \frac{(n+1) \gamma \Delta t}{1-\gamma}  \| g_{w_1}(x,t^1) - g_{w_2}(x, t^1) \|^2 
+ \frac{2\gamma^2  \Delta t } {1 - \gamma} \sum_{ind=1}^{n} \sum_{k=1}^{ind} \| g_{w_1}(x,t^k) - g_{w_2}(x,t^k) \|^2,
\end{align*}
where we apply (\ref{stability1}) in the last inequality.

Therefore, 
\begin{align*}
 \sum_i \| r_i^{n+1} \|^2 
\leq &\frac{2(n+1) (\Delta t)^2}{1-\gamma}  \| g_{w_1}(x,t^1) - g_{w_2}(x, t^1) \|^2
+  \frac{4\gamma  (\Delta t)^2 } {1 - \gamma} \sum_{ind=1}^{n} \sum_{k=1}^{ind} \| g_{w_1}(x,t^k) - g_{w_2}(x,t^k) \|^2 \\
\leq & \frac{2T(\Delta t)}{1-\gamma}  \| g_{w_1}(x,t^1) - g_{w_2}(x, t^1) \| ^2
+  \frac{4\gamma T^2}{1-\gamma} \max_{1\leq k \leq N-1} \| g_{w_1}(x,t^k) - g_{w_2}(x,t^k) \|^2,
\end{align*}
where $T = N \Delta t$ is the final time. 
Also, from (\ref{ineq-1}), we have
\begin{align*}
\|r^{n+1}\|_a^2 
\leq& \frac{\gamma\Delta t}{1-\gamma}  \| g_{w_1}(x,t^1) - g_{w_2}(x, t^1) \|^2 
+  \frac{2\gamma^2 \Delta t} {1-\gamma} \sum_{k=1}^n \| g_{w_1}(x,t^k) - g_{w_2}(x,t^k) \|^2 \\
\leq& \frac{\gamma \Delta t}{1-\gamma}  \| g_{w_1}(x,t^1) - g_{w_2}(x, t^1) \|^2 
+  \frac{2\gamma^2 T } {1-\gamma} \max_{1\leq k \leq N-1} \| g_{w_1}(x,t^k) - g_{w_2}(x,t^k) \|^2. 
\end{align*}

With these two inequalities, we can obtain the conclusion.
%\end{proof}

\end{proof}

\bibliographystyle{abbrv}
\bibliography{references,references4,references1,references2,references3,decSol,refParamLearn}

\end{document}